\newtheorem{theorem}{Theorem}[section]
\newtheorem{proposition}[theorem]{Proposition}
\newtheorem{lemma}[theorem]{Lemma}
\newtheorem{corollary}[theorem]{Corollary}
\theoremstyle{remark}
\newtheorem*{examples}{Examples}
\numberwithin{equation}{section}
\newcommand{\remove}[1]{ }
\newcommand{\set}[1]{\left\{#1\right\}}
\def\uu{\mathcal U}
\def\vv{\mathcal V}
\def\ww{\mathcal W}
\def\uuu{\overline{\mathcal U}}
\begin{document}
\title{On the number of unique expansions in non-integer bases}
\author{Martijn de Vries}
\address{Delft University of Technology, Mekelweg 4, 2628 CD Delft, the Netherlands}
\email{w.m.devries@ewi.tudelft.nl}
\subjclass[2000]{Primary:11A63, Secondary:11B83}
\thanks{The author has been supported by NWO, Project nr. ISK04G}
\keywords{Thue--Morse sequence, greedy expansion, quasi-greedy expansion, unique expansion, univoque sequence, univoque number}


%
\begin{abstract}
Let $q > 1$ be a real number and let $m=m(q)$ be the largest integer smaller than $q$. It is well known that each number
$x \in J_q:=[0, \sum_{i=1}^{\infty} m q^{-i}]$ can be written as $x=\sum_{i=1}^{\infty}{c_i}q^{-i}$ with integer coefficients
$0 \le c_i < q$. If $q$ is a non-integer, then almost every $x \in J_q$ has continuum many expansions of this form. In this note we consider some
properties of the set $\uu_q$ consisting of numbers $x \in J_q$ having a unique representation of this form. More specifically, we compare the size of the sets $\uu_q$ and $\uu_r$ for values $q$ and $r$ satisfying $1< q < r$ and $m(q)=m(r)$.
\end{abstract}

\maketitle


\section{Introduction}\label{s1}

Beginning with the pioneering works of R\'enyi \cite{R} and Parry \cite{P}, expansions of real numbers in non-integer bases have been widely studied during the last fifty years.

In this paper we consider only sequences of nonnegative integers.
Given a real number $q>1$, an {\it expansion in base} $q$ (or simply {\it expansion}) of a real number $x$ is a sequence $(c_i)=c_1c_2 \ldots$ of integers satisfying

\begin{equation*}
0 \le c_i < q \text{ for each } i \ge 1 \text{ and } x= \sum_{i=1}^{\infty} \frac{c_i}{q^i}.
\end{equation*}
Note that this definition is only meaningful if $x$ belongs to the interval
\begin{equation*}
J_q: = \left[0, \frac{\lceil q \rceil - 1}{q-1} \right] ,
\end{equation*}
where $\lceil q \rceil$ is the smallest integer larger than or equal to $q$. Note that $[0,1] \subset J_q$.

The {\it greedy} expansion of a number $x \in J_q$, denoted by $(b_i(x))$ or $(b_i)$,  can be obtained by performing the greedy algorithm \cite{R}: if for some $n \in \mathbb{N}:=\mathbb{Z}_{\ge 1}$, $b_i = b_i(x)$ is already defined for $1 \le i < n$ (no condition if $n=1$), then $b_n = b_n(x)$ is the largest integer smaller than $q$ such that
\begin{equation*}
\sum_{i=1}^{n} \frac{b_i}{q^i} \le x.
\end{equation*}

If $x \in J_q \setminus \set{0}$, then the {\it quasi-greedy} expansion, denoted by $(a_i(x))$ or $(a_i)$,  is obtained by applying the quasi-greedy algorithm \cite{DK2, KL2, BK}:
if for some $n \in \mathbb{N}$, $a_i = a_i(x)$ is already defined for $1 \le i < n$ (no condition if $n=1$), then $a_n = a_n(x)$ is the largest integer smaller than $q$ such that
\begin{equation*}
\sum_{i=1}^{n} \frac{a_i}{q^i} < x.
\end{equation*}

The quasi-greedy expansion of $x \in J_q \setminus \set{0}$ is always infinite (we call an expansion {\it infinite} if it contains infinitely many nonzero elements; otherwise it is called {\it finite}) and coincides with the greedy expansion $(b_i(x))$ if and only if the latter is infinite. If the greedy expansion of $x \in J_q \setminus \set{0}$ is finite and $b_n$ is its last nonzero element, then
$(a_i(x))= b_1 \ldots b_{n-1}b_n^{-} \alpha_1 \alpha_2 \ldots$, where $b_n^- := b_n - 1$ and $\alpha_i= \alpha_i(q):= a_i(1)$, $i \ge 1$. For
convenience, we set $(a_i(0)):=0^{\infty}$ and refer to it as the quasi-greedy expansion of $0$ in base $q$. We will also write $q \sim (\alpha_i)$ if the quasi-greedy expansion of 1 in base $q$ is given by $(\alpha_i)$.

If $q >1$ is an integer, then the greedy expansion of a number $x \in J_q=[0,1]$ is in fact the only expansion of $x$ in base $q$, except when $x = i/q^n$, where $1 \le i \le q^n - 1$ is an integer and $n \in \mathbb{N}$. However, if $q >1$ is a non-integer, then almost every $x \in J_q$ has continuum many expansions in base $q$, see \cite{DDV}, \cite{S}. Starting with a discovery of Erd\H{o}s, Horv\'ath and Jo\'o \cite{EHJ}, many works during the last fifteen years were devoted to the study of the exceptional set $\uu_q$ consisting of those numbers $x \in J_q$ with a unique expansion in base $q$. For instance, it was shown in \cite{EJK1} that if $1 < q < (1+ \sqrt{5}) /2$, then {\it each} number in the interior of $J_q$ has continuum many expansions. Hence, in this case, $\uu_q = \set{0, 1/(q-1)}$. However, if $q > (1+ \sqrt{5}) /2$, then the set $\uu_q$ is infinite \cite{DK1}.

In order to mention some more sophisticated properties of the set $\uu_q$ for various values of $q$, we introduce the set of {\it univoque numbers} $\uu$, defined by
\begin{equation*}
\uu:=\set{ q >1 : 1 \text{ has a unique expansion in base } q}.
\end{equation*}
It was shown in \cite{EHJ} that the set $\uu \cap (1,2)$ has continuum many elements. Subsequently, the set $\uu$ was characterized lexicographically in \cite{EJK1, EJK2, KL2}, its smallest element $q_1 \approx 1.787$ was determined in \cite{KL1}, and its topological structure was described in \cite{KL2}. It was also shown in \cite{KL1} that the unique expansion of 1 in base $q_1$ is given by the truncated Thue--Morse sequence $(\tau_i)= 11010011 \ldots$, which can be defined recursively by setting $\tau_{2^N}=1$ for $N=0,1,2,\ldots$ and
\begin{equation*}
\tau_{2^N+i} = 1 - \tau_i \quad \mbox{for }
1 \leq i < 2^N, \, N=1,2, \ldots.
\end{equation*}
 Using the structure of this expansion, Glendinning and Sidorov \cite{GS} proved that $\uu_q$ is countable if $1 < q < q_1$ and has the cardinality of the continuum if $q_1 \le q < 2$ (see also \cite{DVK}). They also proved that if $1 < q < q_1$, then the (unique) expansion in base $q$
 of a number $x \in \uu_q$ is ultimately periodic. Finally, the topological structure of the sets $\uu_q$ $(q>1)$ was established in \cite{DVK}.

Let us call a sequence $(c_i)=c_1c_2 \ldots$ with integers $0 \le c_i < q$ {\it univoque in base} $q$ (or simply {\it univoque} if $q$ is understood) if
\begin{equation*}
x=\sum_{i=1}^{\infty} \frac{c_i}{q^i}
\end{equation*}
belongs to $\uu_q$. Let $\uu_q'$ denote the set of all univoque
sequences in base $q$. Clearly, there is a natural bijection
between $\uu_q$ and $\uu_q'$. In what follows we use
systematically the lexicographical order between sequences: \ we
write $(a_i) < (b_i)$ or $a_1a_2 \ldots < b_1b_2 \ldots$ if there
is an integer $n \in \mathbb{N}$ such that $a_i = b_i$ for $i < n$
and $a_n < b_n$.  We recall the following theorem which is essentially due
to Parry \cite{P}:

\begin{theorem}\label{t11}
Let $q>1$ be a real number and let $m$ be the largest integer smaller than $q$.

\begin{itemize}
\item[\rm (i)] A sequence $(b_i)=b_1b_2 \ldots \in \set{0, \ldots, m}^{\mathbb{N}}$ is the greedy expansion of a number $x \in J_q$
if and only if
\begin{equation*}
b_{n+1} b_{n+2} \ldots < \alpha_1 \alpha_2 \ldots \quad \text{whenever} \quad b_n < m.
\end{equation*}
\item[\rm (ii)]  A sequence $(c_i)=c_1c_2 \ldots \in \set{0, \ldots, m}^{\mathbb{N}}$ is univoque if and only if
\begin{equation*}
c_{n+1} c_{n+2} \ldots < \alpha_1 \alpha_2 \ldots \quad \mbox{whenever} \quad c_n < m
\end{equation*}
and
\begin{equation*}
\overline{c_{n+1} c_{n+2}  \ldots} < \alpha_1 \alpha_2 \ldots \quad \mbox{whenever} \quad c_n > 0,
\end{equation*}
where $\overline{c_i}:=m-c_i=\alpha_1-c_i, i \in \mathbb{N}$, and $\overline{c_1 c_2 \ldots} = \overline{c_1} \, \overline{c_2} \ldots$.
\end{itemize}
\end{theorem}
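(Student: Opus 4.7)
The plan is to derive (i) as a direct reformulation of the greedy algorithm via lexicographic comparison with $(\alpha_i)$, and then obtain (ii) from (i) by applying the reflection $c_i \mapsto \overline{c_i} = m - c_i$, which swaps the greedy and the lazy expansions.

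For (i), the key auxiliary observation I would establish first is that $(\alpha_i)$ is the lexicographically largest infinite sequence $(d_i) \in \set{0,\ldots,m}^{\mathbb{N}}$ satisfying $\sum_{i=1}^\infty d_i/q^i \le 1$. Indeed, if $(d_i) > (\alpha_i)$ with first difference at index $n$, then $d_n \ge \alpha_n + 1$, and the quasi-greedy algorithm (which picked $\alpha_n$ as the largest integer $< q$ with $\sum_{i=1}^n \alpha_i/q^i < 1$) forces $\sum_{i=1}^{n-1}\alpha_i/q^i + (\alpha_n+1)/q^n \ge 1$; infinitude of $(d_i)$ then pushes the full sum strictly past $1$. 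Given this, the ``only if'' part of (i) is immediate: if $b_n < m$, then the greedy algorithm rejected $b_n + 1$, which translates to $\sum_{i=1}^\infty b_{n+i}/q^i < 1$, hence $(b_{n+i}) \le (\alpha_i)$; strictness holds since equality of sequences would give tail sum $1$, and a finite tail is disposed of separately using that $(\alpha_i)$ is infinite. For the ``if'' direction, I would run the greedy algorithm on $x = \sum b_i/q^i$ and verify by induction on $n$ that it reproduces $(b_i)$; the nontrivial step at positions where $b_n < m$ requires $\sum_{i=1}^\infty b_{n+i}/q^i < 1$, which follows from $(b_{n+i}) < (\alpha_i)$ together with the auxiliary statement (equality in the sum would force equality of sequences).

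For (ii), I would use the reflection $\sigma\colon (c_i) \mapsto (\overline{c_i})$. Since $\sum_{i=1}^\infty m/q^i = m/(q-1)$, this involution on $\set{0,\ldots,m}^{\mathbb{N}}$ corresponds on values to $x \mapsto m/(q-1) - x$ on $J_q$ and visibly preserves uniqueness of expansions, so $(c_i) \in \uu_q'$ iff $(\overline{c_i}) \in \uu_q'$. Next, I would argue that an expansion of $x$ is unique iff it coincides with both the greedy and the lazy expansion of $x$: every expansion of $x$ is lex-squeezed between the lazy (lex-smallest) and the greedy (lex-largest), so their coincidence forces uniqueness, and conversely. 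Moreover, the lazy expansion of $x$ is exactly the $\sigma$-image of the greedy expansion of $m/(q-1) - x$, so $(c_i)$ equals the lazy expansion of $x$ iff $(\overline{c_i})$ is the greedy expansion of $m/(q-1)-x$. Applying part (i) to $(c_i)$ yields the first condition of (ii), and applying (i) to $(\overline{c_i})$ yields the second; the translation $\overline{c_n} < m \iff c_n > 0$ matches the stated form.

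The main obstacle is the careful handling of strict versus non-strict inequalities around the auxiliary maximality statement for $(\alpha_i)$, especially when the shifted tail $(b_{n+i})$ is either eventually zero or equal to $(\alpha_i)$; the fact that the quasi-greedy (and not the greedy) expansion of $1$ is used, together with its infinitude, is precisely what makes these edge cases go through. Once (i) is pinned down, part (ii) is a clean formal corollary via the reflection and the greedy/lazy squeeze.
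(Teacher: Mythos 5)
The paper does not prove Theorem~\ref{t11}; it quotes it from Parry and the later literature (\cite{P}, \cite{BK}, \cite{DVK}), so your proposal must be judged against the standard argument. Your overall architecture is the standard one: prove the greedy characterization (i), then deduce (ii) from the reflection $c_i\mapsto m-c_i$ and the fact that an expansion is unique iff it is simultaneously the greedy (lex-largest) and the lazy (lex-smallest) expansion. Part (ii) and the ``only if'' half of (i) are fine as sketched. However, the ``if'' half of (i) contains a genuine gap. You claim that $\sum_{i=1}^{\infty} b_{n+i}q^{-i}<1$ ``follows from $(b_{n+i})<(\alpha_i)$ together with the auxiliary statement.'' Your auxiliary statement only gives the implication (infinite and sum $\le 1$) $\Rightarrow$ (lex $\le(\alpha_i)$); you are invoking its converse, which is false. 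Concretely, for $q=1.4$ one has $m=1$ and $(\alpha_i)=1001\ldots$, and the sequence $0111\ldots$ is lexicographically smaller than $(\alpha_i)$ while its value $\sum_{i\ge 2}q^{-i}=1/(q(q-1))\approx 1.786$ exceeds $1$. Likewise the parenthetical ``equality in the sum would force equality of sequences'' is false: distinct sequences routinely represent the same number (that is the whole phenomenon the paper studies; e.g.\ $0111\ldots$ and $(10)^{\infty}$ both represent $1$ in base $(1+\sqrt 5)/2$).

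What is missing is the actual technical heart of Parry-type theorems: a single lexicographic comparison of the tail at position $n$ with $(\alpha_i)$ does not control the value of that tail; one must use the hypothesis at \emph{all} subsequent positions. The standard repair is recursive: since $(b_{n+i})<(\alpha_i)$, let $k_1$ be the first index with $b_{n+k_1}<\alpha_{k_1}$; then $b_{n+k_1}\le\alpha_{k_1}-1$ and also $b_{n+k_1}<m$, so the hypothesis applies again at $n+k_1$, producing $k_2$, and so on. Writing $n_0=n$, $n_{j+1}=n_j+k_{j+1}$ and telescoping
\begin{equation*}
\sum_{i=1}^{\infty} b_{n_j+i}q^{-i}\le\Bigl(\sum_{i=1}^{k_{j+1}}\alpha_i q^{-i}-q^{-k_{j+1}}\Bigr)+q^{-k_{j+1}}\sum_{i=1}^{\infty} b_{n_{j+1}+i}q^{-i},
\end{equation*}
together with $\sum_{i=1}^{k}\alpha_i q^{-i}<1$ for every $k$ and the uniform bound $\sum_i b_{\ell+i}q^{-i}\le m/(q-1)$, yields $\sum_i b_{n+i}q^{-i}<1$ in the limit. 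Without some such iteration (or an equivalent appeal to the quasi-greedy characterization, Proposition~\ref{p22}), the ``if'' direction of (i) — and hence of (ii), which you correctly reduce to it — is not established.
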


Using the fact that the map $q \mapsto (\alpha_i(q))$ is strictly increasing, it follows at once from this theorem that $\uu_q' \subset \uu_r'$
if $1< q < r$ and $\lceil q \rceil = \lceil r \rceil$. It is the aim of this note to generalize the above mentioned result of
Glendinning and Sidorov~\cite{GS} by considering the difference of the sets $\uu_q'$ and $\uu_r'$, $1 < q < r$, $\lceil q \rceil= \lceil r \rceil$:

\begin{theorem}\label{t12}
Let $1 < q < r$ be real numbers such that $\lceil q \rceil =
\lceil r \rceil$. The following statements are equivalent.

\begin{itemize}
\item [\rm(i)] $(q, r ] \cap \uu = \varnothing$.

\item [\rm(ii)] $(q, r ] \cap \uuu = \varnothing$.

\item [\rm(iii)] Each sequence $(c_i) \in \uu_r' \setminus \uu_q'$ is
ultimately periodic \footnote{We call a sequence $(c_i)=c_1c_2 \ldots$ {\it ultimately periodic} if $c_{n+1} c_{n+2} \ldots$ is periodic for some $n \ge 0$.}.

\item [\rm (iv)] $\uu_r' \setminus \uu_q'$ is countable.

\end{itemize}

\end{theorem}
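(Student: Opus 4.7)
The plan is to prove the four statements equivalent through the implications (ii) $\Rightarrow$ (i), (iii) $\Rightarrow$ (iv) (both trivial), together with (iv) $\Rightarrow$ (i), (i) $\Rightarrow$ (ii), and (i) $\Rightarrow$ (iii). The first pair is immediate: (ii) $\Rightarrow$ (i) is just $\uu \subseteq \uuu$, and (iii) $\Rightarrow$ (iv) follows from the countability of the set of ultimately periodic sequences over the finite alphabet $\set{0,1,\ldots,m}$.

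For (iv) $\Rightarrow$ (i), I would argue by contraposition. Given $s \in (q,r] \cap \uu$, write $\beta_1\beta_2\cdots := (\alpha_i(s))$. Since $s \in \uu$, the sequence $\beta$ is univoque at base $s$ and hence at base $r$, with strict shift inequalities against $\beta$. Following the Glendinning--Sidorov strategy, I construct uncountably many sequences by concatenating arbitrarily long prefixes of $\beta$ with short admissible separator words carrying binary choices, schematically
\begin{equation*}
\beta_1 \cdots \beta_{N_1}\, w_1\, \beta_1 \cdots \beta_{N_2}\, w_2 \cdots .
\end{equation*}
Theorem~\ref{t11}(ii), together with $\beta \le (\alpha_i(r))$, guarantees each such sequence lies in $\uu_r'$; on the other hand, $\beta > (\alpha_i(q))$ (from $s > q$) ensures that the shift beginning at each $\beta$-prefix eventually exceeds $(\alpha_i(q))$ lexicographically, so the sequence is not in $\uu_q'$. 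The binary freedom in the $w_k$ gives $2^{\aleph_0}$ distinct sequences in $\uu_r' \setminus \uu_q'$.

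For (i) $\Rightarrow$ (ii) I would invoke the Komornik--Loreti description of $\uuu$, according to which every element of $\uuu \setminus \uu$ is a left-limit point of $\uu$. Hence any $s \in (q,r] \cap \uuu$ either lies in $\uu$ (contradicting (i)) or is approached from below by points of $\uu$, producing a point of $\uu$ in $(q,s) \subseteq (q,r]$, again contradicting (i). For (i) $\Rightarrow$ (iii), the combinatorial heart of the theorem, I start with $(c_i) \in \uu_r' \setminus \uu_q'$. Using Theorem~\ref{t11}(ii) together with the involution $(c_i) \mapsto (\overline{c_i})$ (which preserves $\uu_r' \setminus \uu_q'$), I isolate an index $n_0$ with $c_{n_0} < m$ and $(\alpha_i(q)) \le c_{n_0+1}c_{n_0+2}\cdots < (\alpha_i(r))$. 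Letting $\gamma$ be the lexicographic supremum of all critical shifts and conjugate shifts of $(c_i)$, one has $\gamma \in [(\alpha_i(q)),(\alpha_i(r))]$; a compactness argument on the shift closure of $(c_i)$ identifies $\gamma = (\alpha_i(s))$ for some $s \in [q,r]$, and the self-shift structure inherent in such a supremum forces $s \in \uuu$. Applying the already-established (i) $\Rightarrow$ (ii) rules out $s \in (q,r]$, so $s=q$ and $\gamma = (\alpha_i(q))$. The final step is to deduce ultimate periodicity of $(c_i)$ from this equality.

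The main obstacle is this final rigidity step: showing that a sequence $(c_i) \in \uu_r'$ whose critical shifts have lexicographic supremum exactly $(\alpha_i(q))$ must be ultimately periodic. This will require a dedicated lemma exploiting the fact that any ``non-periodic'' approach of shifts to $(\alpha_i(q))$ would generate a richer orbit closure, yielding a quasi-greedy expansion $(\alpha_i(s'))$ of $1$ at some base $s' \in (q,r]$ forced into $\uuu$, contradicting (ii). A secondary technical point is verifying that the compactness identification $\gamma = (\alpha_i(s))$ genuinely places $s$ in $\uuu$ rather than in some larger closure of the image of $s \mapsto (\alpha_i(s))$; this hinges on the constraint that $\gamma$ is a sup over the shifts of a \emph{single} sequence $(c_i) \in \uu_r'$, which transfers enough of the univoqueness inequalities from $(c_i)$ to $\gamma$ itself.
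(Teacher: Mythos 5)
Your overall architecture (proving (ii)$\Rightarrow$(i), (iii)$\Rightarrow$(iv), (i)$\Rightarrow$(ii), (i)$\Rightarrow$(iii), (iv)$\Rightarrow$(i)) is logically sufficient, and your (i)$\Rightarrow$(ii) and the two trivial implications are fine. But your proof of (i)$\Rightarrow$(iii) contains a genuine error at its pivot point. You claim that the lexicographic supremum $\gamma$ of the critical shifts and conjugate shifts of $(c_i)$ is of the form $(\alpha_i(s))$ with $s$ ``forced into $\uuu$,'' whence $s=q$ and $\gamma=(\alpha_i(q))$. The supremum structure can at best force the weak inequalities $\overline{\alpha_{k+1}(s)\alpha_{k+2}(s)\ldots}\le\alpha_1(s)\alpha_2(s)\ldots$ for all $k$, i.e.\ $s\in\vv$, not the strict inequalities characterizing $\uuu$; and $\vv\setminus\uuu$ meets $(q,r)$ in general. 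Concretely, take $q<G<r<q_1$ with $G=(1+\sqrt5)/2$, so that $(q,r]\cap\uuu=\varnothing$ because $\min\uuu=q_1$. The sequence $(01)^{\infty}$ (the quasi-greedy expansion of $1/G$ in base $G$) lies in $\vv_G'\setminus\uu_G'\subset\uu_r'\setminus\uu_q'$, and the supremum of its critical and conjugate shifts is exactly $(10)^{\infty}=(\alpha_i(G))$ with $G\in\vv\setminus\uuu$ --- not $(\alpha_i(q))$, which is strictly smaller. So the equality $\gamma=(\alpha_i(q))$ on which your ``rigidity lemma'' rests is simply false, and the lemma is aimed at the wrong target. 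The correct statement, and the paper's route, is that under (ii) the interval $(q,r]$ lies in a single component of $(1,\infty)\setminus\uuu$, the set $[q,r)\cap\vv=\set{r_1,\ldots,r_m}$ is \emph{finite} (because $\vv\setminus\uuu$ is discrete), one has $\uu_r'=\uu_q'\cup\bigcup_{\ell}(\vv_{r_\ell}'\setminus\uu_{r_\ell}')$, and every member of $\vv_{r_\ell}'\setminus\uu_{r_\ell}'$ \emph{ends with} $(\alpha_i(r_\ell))$ or its conjugate, which is periodic because $r_\ell\in\vv\setminus\uu$.

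Your (iv)$\Rightarrow$(i) is also incomplete at its hardest point. The case $(q,r]\cap\uu=\set{r}$ genuinely occurs (right endpoints of components of $(1,\infty)\setminus\uuu$ lie in $\uu$), and there $\beta=(\alpha_i(r))$, so ``Theorem~\ref{t11}(ii) together with $\beta\le(\alpha_i(r))$'' does not guarantee membership in $\uu_r'$: a shift beginning at a $\beta$-prefix agrees with $(\alpha_i(r))$ for $N_k$ digits, and the required strict inequality must be recovered from a comparison of the separator word $w_k$ with $\alpha_{N_k+1}(r)\alpha_{N_k+2}(r)\ldots$, while simultaneously preserving all conjugate inequalities; none of this is verified. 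This boundary case is precisely where the paper abandons explicit constructions and argues topologically: the set $\ww_r$ of points of $\uu_r$ whose unique expansion lies in $\uu_q'$ is closed, whereas $\uu_r$ is not closed for $r\in\uu$ and $\overline{\uu_r}$ is perfect with $\overline{\uu_r}\setminus\uu_r$ countable, so $\uu_r\setminus\ww_r$ cannot be countable. Your construction may be completable with substantial extra work, but as written it asserts rather than proves the inclusion in $\uu_r'$, and the (i)$\Rightarrow$(iii) step needs to be redone along the lines above.
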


Incidentally, we will also obtain new characterizations of the set of univoque numbers $\uu$ and its closure $\uuu$ (for other characterizations, see \cite{DVK, KL2}).

\section{Proof of Theorem~\ref{t12}}\label{s2}

Recently, Baiocchi and Komornik \cite{BK} reformulated and extended some classical results of R\'enyi, Parry, Dar\'oczy and K\'atai
\cite{R, P, DK2} by characterizing the quasi-greedy expansions of numbers $x \in J_q \setminus \set{0}$ in a fixed base $q > 1$ (see Proposition~\ref{p22} below).

\begin{proposition}\label{p21}
The map $q \mapsto (\alpha_i(q))$ is a strictly increasing bijection from
the open interval $(1, \infty)$ onto the set of all {\rm infinite} sequences $(\alpha_i)$
satisfying
\begin{equation*}
\alpha_{k+1} \alpha_{k+2} \ldots \leq \alpha_1 \alpha_2 \ldots \quad
\mbox{for all} \quad k\geq 1.
\end{equation*}
\end{proposition}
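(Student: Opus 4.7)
The plan is to split the proof into three parts: necessity of the Parry condition for $(\alpha_i(q))$, strict monotonicity of the map $q \mapsto (\alpha_i(q))$ (which gives injectivity), and surjectivity onto the set of infinite sequences satisfying Parry's condition. Throughout I would rely on two preliminary facts, both for a fixed base $q > 1$. \emph{(F1)} The quasi-greedy expansion $(a_i(x))$ of any $x \in J_q \setminus \{0\}$ is the lexicographically largest among all infinite expansions of $x$ in base $q$; this is a short consequence of the quasi-greedy algorithm, since any lexicographically larger infinite competitor would force its partial sums to reach or overshoot $x$ before the expansion is completed. \emph{(F2)} If $0 < x < y$ are both in $J_q$, then $(a_i(x)) < (a_i(y))$ lexicographically; this follows by a digit-by-digit induction, since the set of admissible digits at each step of the quasi-greedy algorithm depends monotonically on the current value.

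Given (F1) and (F2), necessity of Parry's condition is short. For each $k \ge 0$ the tail $\alpha_{k+1}(q) \alpha_{k+2}(q) \ldots$ is itself the quasi-greedy expansion in base $q$ of the number $x_k := q^k\bigl(1 - \sum_{i=1}^k \alpha_i(q) q^{-i}\bigr)$, and a direct check using the quasi-greedy algorithm shows $x_k \in (0, 1]$. The non-strict version of (F2) applied to $x_k \le 1$ yields Parry's inequality $\alpha_{k+1}(q) \alpha_{k+2}(q) \ldots \le \alpha_1(q) \alpha_2(q) \ldots$. Strict monotonicity in $q$ is equally quick: if $1 < q < r$, then $y := \sum \alpha_i(q) r^{-i}$ lies in $(0, 1)$ and $(\alpha_i(q))$ is an infinite expansion of $y$ in base $r$; by (F1) applied in base $r$, $(\alpha_i(q))$ is lexicographically at most the quasi-greedy expansion of $y$ in base $r$, which by (F2) is strictly less than $(\alpha_i(r))$. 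Injectivity is an immediate consequence.

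For surjectivity, fix an infinite sequence $(\beta_i)$ satisfying Parry's condition. The function $f(t) := \sum \beta_i t^{-i}$ is continuous and strictly decreasing on $(1, \infty)$, with $f(t) \to \infty$ as $t \to 1^+$ (since $(\beta_i)$ is infinite) and $f(t) \to 0$ as $t \to \infty$, so there is a unique $q > 1$ with $f(q) = 1$. Parry's condition forces $\beta_i \le \beta_1$ for all $i \ge 1$, and combined with $f(q) = 1$ this gives $\beta_1 < q$, so $(\beta_i)$ is a valid infinite expansion of $1$ in base $q$. A similar squeezing argument forces $\beta_1 = \lceil q \rceil - 1 = \alpha_1(q)$, giving the base case of an induction. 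The inductive step $\beta_n = \alpha_n(q)$ reduces, after unwinding the quasi-greedy algorithm, to the estimate
\begin{equation*}
R_n := \sum_{j \ge 1} \beta_{n+j} q^{-j} \le 1 \quad \text{for every } n \ge 1.
\end{equation*}
This estimate is \textbf{the main obstacle}, because the lexicographic Parry inequality $\beta_{n+1}\beta_{n+2}\ldots \le \beta_1\beta_2\ldots$ does not follow from a naive term-by-term bound once $q$ is non-integer (the crude bound $(\lceil q\rceil-1)/(q-1)$ can exceed $1$). The trick is to rewrite $R_n - 1 = \sum_{i \ge 1}(\beta_{n+i} - \beta_i) q^{-i}$, let $k$ be the first index at which $\beta_{n+k} < \beta_k$ (if no such $k$ exists then $\beta_{n+i} = \beta_i$ for all $i$ and $R_n = 1$ directly), and discard the non-positive contributions $-\beta_i q^{-i}$ for $i > k$, to obtain
\begin{equation*}
R_n - 1 \le q^{-k}\bigl(R_{n+k} - 1\bigr).
\end{equation*}
If $R_n > 1$ for some $n$, iterating this inequality through the successive ``first differing indices'' produces positions $n + K_j$ with $K_j \to \infty$ and $R_{n+K_j} - 1 \ge q^{K_j}(R_n - 1)$, contradicting the universal upper bound $R_m \le (\lceil q\rceil-1)/(q-1)$. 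Hence $R_n \le 1$ for all $n$, the induction closes, and $(\beta_i) = (\alpha_i(q))$, completing the surjectivity.
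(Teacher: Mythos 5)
Your proof is correct. Note that the paper itself offers no proof of Proposition~\ref{p21}: it is quoted as a known result of Baiocchi and Komornik \cite{BK} (building on Parry \cite{P} and Dar\'oczy--K\'atai \cite{DK2}), so there is no in-paper argument to compare against. Your reconstruction follows the standard route of the cited literature: facts (F1)--(F2) are the usual lexicographic characterizations of quasi-greedy expansions, the necessity and monotonicity parts are handled exactly as one would expect, and you correctly identify that the whole of surjectivity hinges on the estimate $R_n=\sum_{j\ge 1}\beta_{n+j}q^{-j}\le 1$, which does \emph{not} follow from a termwise bound. Your treatment of that estimate --- writing $R_n-1=\sum_i(\beta_{n+i}-\beta_i)q^{-i}$, using the first index $k$ of strict decrease to get $R_n-1\le q^{-k}(R_{n+k}-1)$, and iterating against the uniform bound $R_m\le(\lceil q\rceil-1)/(q-1)$ --- is sound; each step checks out (in particular $\beta_{n+k}-\beta_k\le -1$ gives the needed $-q^{-k}$, and the discarded terms are indeed nonpositive). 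The only points left implicit, all harmless since they are recorded in the paper's introduction or are immediate, are that $(\alpha_i(q))$ is infinite, that $x_k\in(0,1]$ (which you assert and which follows by the inductive bound $x_k=qx_{k-1}-\alpha_k\le 1$), and that $\beta_1=\lceil q\rceil-1$ via $\beta_1\ge q-1$ from $1\le\beta_1/(q-1)$.
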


\begin{proposition}\label{p22}
For each $q>1$, the map $x \mapsto(a_i(x))$ is a strictly increasing bijection
from $(0, \alpha_1 /(q-1)]$ onto the set of all {\rm infinite} sequences $(a_i)$,
satisfying
\begin{equation*}
0 \leq a_n \leq \alpha_1 \quad \mbox{for all} \quad n \geq 1
\end{equation*}
and
\begin{equation*}
a_{n+1}a_{n+2}\ldots \leq \alpha_1 \alpha_2 \ldots \quad \mbox{whenever}
\quad a_n < \alpha_1.
\end{equation*}
\end{proposition}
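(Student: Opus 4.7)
The plan is to verify the map's range, surjectivity, and monotonicity separately. For the range, fix $x \in (0, \alpha_1/(q-1)]$ and let $(a_i) := (a_i(x))$. The sequence is infinite by the remark in the introduction, and $a_n \le \alpha_1$ because $a_n$ is a nonnegative integer strictly less than $q$ while $\alpha_1$ is the largest such integer. When $a_n < \alpha_1$, the maximality of $a_n$ in the quasi-greedy algorithm yields
\[
\sum_{i=1}^{n-1} \frac{a_i}{q^i} + \frac{a_n+1}{q^n} \ge x,
\]
which rearranges to $\sum_{j \ge 1} a_{n+j}/q^j \le 1$. I would then translate this arithmetic bound into the desired lexicographic inequality $(a_{n+j})_{j \ge 1} \le (\alpha_j)_{j \ge 1}$, exploiting infiniteness of the tail and the characterization of $(\alpha_j)$ from Proposition~\ref{p21}.

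For surjectivity, I would take an infinite sequence $(a_i)$ satisfying the two conditions and set $x := \sum_{i \ge 1} a_i/q^i$. The entrywise bound $a_i \le \alpha_1$ gives $x \le \alpha_1/(q-1)$, and infiniteness of $(a_i)$ gives $x > 0$. To show $(a_i(x)) = (a_i)$, I would proceed by induction on $n$, verifying at each step that the quasi-greedy algorithm applied to $x$ selects exactly $a_n$. The inequality $\sum_{i \le n} a_i/q^i < x$ is equivalent to $\sum_{j \ge 1} a_{n+j}/q^j > 0$, which holds because the tail is again infinite. Maximality, when $a_n < \alpha_1$, amounts to $\sum_{j \ge 1} a_{n+j}/q^j \le 1$, which is the converse direction of the translation used in the range step. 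Strict monotonicity then follows by comparing the algorithm's outputs on $x < y$ at the first index where they diverge, using the already-established properties to force $a_k(x) < a_k(y)$.

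The main obstacle is the two-way translation between lexicographic and arithmetic inequalities. For non-integer $q$ the digit-wise bound $\alpha_1/(q-1)$ exceeds $1$, so on arbitrary $\{0,\ldots,\alpha_1\}$-valued sequences the two orderings are genuinely inequivalent: for example, with $q=3/2$ and $\alpha_1=1$, the lex-smaller sequence $0\,1\,1\,1\ldots$ has sum $4/3$, exceeding $\sum_j \alpha_j/q^j = 1$. The resolution exploits that the conditions defining the range are stable under taking tails, so any candidate lex-violation propagates; together with the quasi-greedy characterization of $(\alpha_j)$ furnished by Proposition~\ref{p21}, this rules out exact cancellations in the tail and closes the loop in both directions.
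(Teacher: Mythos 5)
The paper itself gives no proof of this proposition: it is quoted as a result of Baiocchi and Komornik \cite{BK}, so there is no in-paper argument to compare against. Your outline is the standard proof of this statement and is sound. You correctly isolate the crux, namely the two-way passage between the arithmetic inequality $\sum_{j\ge 1}a_{n+j}q^{-j}\le 1$ produced by the maximality clause of the algorithm and the lexicographic inequality $a_{n+1}a_{n+2}\ldots\le\alpha_1\alpha_2\ldots$, and your closing observation that the admissibility conditions are stable under passing to tails is exactly what makes the delicate direction (lex $\Rightarrow$ arithmetic, needed for surjectivity) work: writing $i_1$ for the first index where the tail drops strictly below $(\alpha_j)$ one gets $\sum_j c_jq^{-j}-1< q^{-i_1}(T_1-1)$ with $T_1$ the sum of a further admissible tail, and iterating drives the bound to $0$. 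The reverse direction (arithmetic $\Rightarrow$ lex, needed to show the image lands in the stated set) additionally requires the auxiliary estimate $\sum_{j\ge 1}\alpha_{k+j}q^{-j}\le 1$ for every $k$ — itself an instance of the same lemma applied to the tails of $(\alpha_i)$ via Proposition~\ref{p21} — together with the infiniteness of the tail to make the resulting inequality strict, and you name both ingredients. Your monotonicity argument (compare the two runs of the algorithm at the first divergent index) is also the standard one. The only items taken on faith are that the quasi-greedy expansion of $x$ is infinite and actually sums to $x$; these are asserted in the paper's introduction and are classical, but a fully self-contained write-up would need to include their short proofs.
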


For any fixed $q > 1$, we introduce the sets
\begin{equation*}
\vv_q:= \set{ x \in J_q : \overline{a_{n+1}(x) a_{n+2} (x) \ldots} \le \alpha_1 (q) \alpha_2 (q) \ldots \quad \text{whenever} \quad a_n(x)>0}
\end{equation*}
and
\begin{equation*}
\vv_q':=\set{(a_i(x)): x \in \vv_q}.
\end{equation*}
It follows from Theorem~\ref{t11} that $\uu_q \subset \vv_q$ for each $q >1$.
Moreover, $\vv_q' \subset \uu_r'$ if $1< q < r$ and $\lceil q \rceil = \lceil r \rceil$.  The precise relationship between the sets $\uu_q$,  its closure $\overline{\uu_q}$ and $\vv_q$ for each $q>1$ was described in \cite{DVK}. For instance, it was shown that $\uu_q$ is closed if and only if $q \notin \uuu$. Moreover, $\uu_q = \overline{\uu_q} = \vv_q$, except when $q$ belongs to the closed null set $\vv$ consisting of those bases $q>1$ such that
$$\overline{\alpha_{k+1}(q) \alpha_{k+2}(q) \ldots} \le \alpha_1(q) \alpha_2(q) \ldots \quad \text{for each} \quad k \ge 1.$$
If $q \in \vv$, then the set $\vv_q \setminus \uu_q$ is countably infinite.

The relationship between the sets $\uu$, $\uuu$, and $\vv$ has been investigated in \cite{KL2}. In particular it was shown that
\begin{itemize}
\item  $\uu \subsetneq \uuu \subsetneq \vv$.
\item $\uuu \setminus \uu$ is countable and dense in $\uuu$.
\item $\vv \setminus \uuu$ is a discrete set, dense in $\vv$.
\item $q \in \uuu$ if and only if $\overline{\alpha_{k+1}(q) \alpha_{k+2}(q) \ldots} < \alpha_1(q) \alpha_2(q) \ldots \quad
\text{for each} \quad k \ge 1.$
\end{itemize}

Applying the above mentioned results, one can easily verify the statements in the following examples.

\begin{examples} \mbox{}
\begin{itemize}

\item
The smallest element of $\vv$ is given by $G:= (1 + \sqrt{5})/2$. Moreover, $G \sim (10)^{\infty}$ and $\vv_G'$ is the set of all sequences in $\set{0,1}^{\mathbb{N}}$ such that a one is never followed by two zeros and a zero is never followed by two ones. Hence $\uu_q$ is infinite if
$G < q \le 2$.
\item
Define the numbers $q_n$ $(n \in \mathbb{N})$ by setting $q_n \sim(110)^{n}(10)^{\infty}$. It follows from Theorem~\ref{t11} that all these numbers belong to $\uu$. However, if we set $q^*:=\lim_{n \to \infty} q_n,$ then $q^* \sim (110)^{\infty}$. Note that $q^* \notin \uu$ because $111(0)^{\infty}$ is another expansion of 1 in base $q^*$. Hence $q^* \in \uuu \setminus \uu$.
\end{itemize}
\end{examples}

Without further comment, we use frequently in the proof below some of the main results in \cite{DVK}, and in particular the
analysis of one of the final remarks at the end of \cite{DVK} which is
concerned with the endpoints of the connected components of $(1,\infty) \setminus \uuu$: if we write $(1,\infty) \setminus \uuu$ as the union of countably many disjoint open intervals (its connected components), then the set $L$ of left endpoints of these intervals is given by $L = \mathbb{N} \cup (\uuu \setminus \uu)$ and the set $R$ of right endpoints of these intervals satisfies the relationship $R \subset \uu$.

\begin{proof}[Proof of Theorem~\ref{t12}]

\rm{ (i) $\Longrightarrow$ (ii)}: Suppose that $(q,r] \cap \uu = \varnothing$. Then $(q, r+ \delta ) \cap \uu = \varnothing$ for some $\delta > 0$ because $\uu$ is closed from above \cite{KL2} and (ii) follows.

\noindent
\rm{ (ii) $\Longrightarrow$ (iii)}: If $(q,r] \cap \uuu = \varnothing$, then $(q, r]$ is a subset of a connected component of $(1, \infty) \setminus \uuu$. Moreover, $[q, r) \cap \vv$ is a finite subset $\set{r_1, \ldots, r_m}$ of $\vv \setminus \uu$, where $r_1 < \cdots < r_m$. Although it is not important in the remainder of the proof, we recall from \cite{DVK} that $r_2, \ldots, r_m \in \vv \setminus \uuu$, but $r_1$ might belong to $\uuu \setminus \uu$. We may write
\begin{equation}\label{21}
\uu_r'  = \uu_q' \cup \bigcup_{\ell =1}^{m} \left( \vv_{r_{\ell}}' \setminus \uu_{r_{\ell}}' \right).
\end{equation}

Fix $\ell \in \set{1, \ldots, m}$ and let $x \in \vv_{r_{\ell}} \setminus \uu_{r_\ell}$. If the greedy expansion $(b_i)$ of $x$ in base $r_{\ell}$ is finite, then $(a_i(x))$ ends with $\alpha_1 \alpha_2 \ldots$. Suppose now that $(b_i)$ is infinite. Since $x \notin \uu_{r_{\ell}}$, there exists an index $n$ such that $b_n > 0$, and $\overline{b_{n+1} b_{n+2} \ldots} \ge \alpha_1 \alpha_2 \ldots$. Since $x \in \vv_{r_{\ell}}$ and $(a_i(x)) = (b_i(x))$, the last inequality is in fact an equality.
Hence the quasi-greedy expansion $(a_i(x))$ of $x$ in base $r_{\ell}$ either ends with $(\alpha_i)$ or $(\overline{\alpha_i})$.
Since $(\alpha_i(q))$ is periodic if $q \in \vv \setminus \uu$ \cite{KL2}, the implication follows from \eqref{21}.

\noindent
\rm{ (iii) $\Longrightarrow$ (iv)} is clear.

\noindent
\rm{ (iv) $\Longrightarrow$ (i)}: We prove the contraposition. Suppose that $(q, r] \cap \uu \not= \varnothing.$ We distinguish between two cases.

If $(q, r) \cap \uu \not= \varnothing,$ then $\vert (q, r) \cap \uuu \vert = 2^{\aleph_0}$ because $\uuu$ is a nonempty perfect set \cite{KL2} and thus each neighborhood of a number $t \in \uuu$ contains uncountably many elements of $\uuu$.
Now
\begin{equation*}
\uu_r'  \setminus  \uu_q' \supset \cup_{t \in (q,r) \cap \uuu} \left( \vv_t' \setminus \uu_t' \right).
\end{equation*}
Hence $\uu_r'  \setminus  \uu_q'$ contains an uncountable union of nonempty disjoint sets and is therefore uncountable.

If $(q, r] \cap \uu = \set{r}$, then $(q,r) \cap \uuu = \varnothing$. Hence by enlarging $q$ if necessary, we may assume that $q \notin \uuu$. Let
\begin{equation*}
\ww_r = \set{ x \in \uu_r : \text{the unique expansion of $x$ in base $r$ belongs to $\uu_q'$}}.
\end{equation*}
We claim that $\ww_r$ is closed. The set $\ww_r$ is a symmetric subset of $J_r$, so it suffices to show that $\ww_r$ is closed from below.
Let $x_i \in \ww_r$ $(i \ge 1)$, and suppose that $x_i \uparrow x$. Let $(c_j^i)$ be the unique expansion of $x_i$ in base $r$, and let
\begin{equation*}
y_i= \sum_{j=1}^{\infty} \frac{c_j^i}{q^j}.
\end{equation*}
Then the increasing sequence $(y_i)$ converges to some $y \in \uu_q$ because $\uu_q$ is a compact set.
Since $(c_j^1) \le  (c_j^2) \le \cdots$, $(c_j^i)$ converges coordinate-wise to the unique expansion $(d_j)$ of $y$ in base $q$ as $i \to \infty$, and \begin{equation*}
x= \sum_{j=1}^{\infty} \frac{d_j}{r^j}.
\end{equation*}
Since $\uu_q' \subset \uu_r'$ we have $x \in \uu_r$, and thus $x \in \ww_r$. Now suppose that $\uu_r' \setminus \uu_q'$ is countable.
Then $\uu_r \setminus \ww_r$ is countable. Note that
$\ww_r \subsetneq \uu_r$ because $\ww_r$  is closed and $\uu_r$ is not. Let $x \in \uu_r \setminus \ww_r$.
Since $\overline{\uu_r} \setminus \uu_r$ is a countable dense subset of $\overline{\uu_r}$ (Theorem 1.3 in \cite{DVK}),
the latter set is perfect, and each neighborhood of $x$ contains uncountably many elements of $\uu_r$ and thus of $\ww_r$.
This contradicts the fact that $\ww_r$ is closed.
\end{proof}

The above result yields new characterizations of $\uu$ and $\uuu$:
\begin{corollary}\label{c23}
A real number $q > 1$ belongs to $\uu$ if and only if $\uu_r' \setminus \uu_q'$ is uncountable for each $r > q$ such that $\lceil q \rceil=\lceil r \rceil$.
\end{corollary}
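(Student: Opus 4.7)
The plan is to apply the equivalence (i) $\Leftrightarrow$ (iv) of Theorem~\ref{t12}: for $1 < q < r$ with $\lceil q \rceil = \lceil r \rceil$, the set $\uu_r' \setminus \uu_q'$ is uncountable if and only if $(q, r] \cap \uu \ne \varnothing$. The corollary then reduces to showing that $q \in \uu$ if and only if every $r > q$ with $\lceil r \rceil = \lceil q \rceil$ satisfies $(q, r] \cap \uu \ne \varnothing$. I first dispose of the case $q \in \mathbb{N}$: no such $r$ exists, so the right-hand side is vacuously true, while $q \in \uu$ because $1$ admits the unique expansion $(q-1)^{\infty}$ in integer base $q \ge 2$. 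Henceforth I may assume $q$ is a non-integer.

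For the forward direction, assume $q \in \uu$. Since $q \notin \mathbb{N} \cup (\uuu \setminus \uu) = L$, the set of left endpoints of the connected components of $(1,\infty) \setminus \uuu$, the point $q$ is not the left endpoint of any such component. Hence $\uuu$ accumulates at $q$ from the right, so $(q, r) \cap \uuu$ is nonempty for every admissible $r > q$. Because $\uuu$ is a nonempty perfect set, this intersection is uncountable, and since $\uuu \setminus \uu$ is countable, $(q, r) \cap \uu$ is also uncountable and in particular nonempty, as required.

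For the converse I argue the contrapositive: assuming $q$ is a non-integer not in $\uu$, I produce $r > q$ with $\lceil r \rceil = \lceil q \rceil$ and $(q, r] \cap \uu = \varnothing$. Either $q \in \uuu \setminus \uu \subset L$, so $q$ is the left endpoint of some component $(q, s)$ of $(1,\infty) \setminus \uuu$, or $q \notin \uuu$ lies in the interior of a component with right endpoint $s > q$. In both cases $(q, s) \cap \uuu = \varnothing$. The key observation is that, because $\mathbb{N} \subset L \subset \uuu$, no connected component of $(1,\infty) \setminus \uuu$ can cross an integer, hence $s \le \lceil q \rceil$. Any choice $r \in (q, s)$ then satisfies $\lceil r \rceil = \lceil q \rceil$ and $(q, r] \cap \uu \subset (q, s) \cap \uuu = \varnothing$, contradicting the hypothesis.

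The main delicate point I anticipate is the compatibility of the component structure of $(1,\infty) \setminus \uuu$ with the ceiling constraint: the witness $r$ must be close enough to $q$ to preserve $\lceil r \rceil = \lceil q \rceil$ while still lying in the appropriate component. The observation that every component lies strictly between two consecutive integers, immediate from $\mathbb{N} \subset \uuu$, is precisely what secures this, and it is the one non-trivial input beyond Theorem~\ref{t12} and the standard perfect-set cardinality argument.
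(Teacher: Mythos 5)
Your proof is correct and follows essentially the same route as the paper: both reduce the statement to the equivalence (i) $\Leftrightarrow$ (iv) of Theorem~\ref{t12}, dispose of integer $q$ separately, and use for the forward direction the fact that elements of $\uu \setminus \NN$ are not left endpoints of connected components of $(1,\infty) \setminus \uuu$. The only (harmless) difference is in the converse, where the paper deduces $q \in \uu$ directly from the fact that $\uu$ is closed from above, while you argue the contrapositive via the observation that each component of $(1,\infty) \setminus \uuu$ lies strictly between consecutive integers; both steps are valid.
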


\begin{proof}
Note that the integers $2, 3, \ldots$ belong to $\uu$. For these values of $q$ the condition in the statement is also vacuously satisfied. Hence we may assume that $q \notin \mathbb{N}$. Suppose that $q \in \uu \setminus \mathbb{N}$. For each $r > q$, $(q, r) \cap \uu \not= \varnothing$ because elements of $\uu \setminus \mathbb{N}$ do not belong to the set of left endpoints of the connected components of $(1, \infty) \setminus \uuu$. Hence
if, in addition, $\lceil q \rceil = \lceil r \rceil$, then $\uu_r' \setminus \uu_q'$ is uncountable by Theorem~\ref{t12}. Conversely, if the latter set is uncountable for each $r > q$ such that $\lceil q \rceil=\lceil r \rceil$, then $(q, r] \cap \uu \not=\varnothing$ for each $r > q$ by Theorem~\ref{t12}, and the result follows because $\uu$ is closed from above.
\end{proof}

For a fixed $r > 1$, let $\mathcal{F}_r' = \cup \uu_q'$, where the union runs over all $q < r$ for which $\lceil q \rceil=\lceil r \rceil$.

\begin{corollary}\label{c24}
Let $r >1$ be a real number. The following statements are equivalent.
\begin{itemize}
\item[\rm (i)] $r \in \uuu$.
\item[\rm (ii)] $\uu_r' \setminus \mathcal{F}_r'$ is uncountable.
\item[\rm (iii)] $\uu_r' \setminus \uu_q'$ is uncountable for each $q < r$ such that $\lceil q \rceil=\lceil r \rceil$.
\item[\rm (iv)] $\uu_r' \setminus \uu_q'$ is nonempty for each $q < r$ such that $\lceil q \rceil=\lceil r \rceil$.
\end{itemize}
\end{corollary}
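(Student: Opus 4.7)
The plan is to close the cycle $(ii)\Longrightarrow(iii)\Longrightarrow(iv)\Longrightarrow(i)\Longrightarrow(ii)$. The first two implications are immediate: the inclusion $\uu_q'\subset\mathcal{F}_r'$ for every admissible $q<r$ gives $\uu_r'\setminus\uu_q'\supset\uu_r'\setminus\mathcal{F}_r'$, and uncountable sets are nonempty. Moreover $(i)\Longrightarrow(iii)$ is a one-line consequence of Theorem~\ref{t12}: if $r\in\uuu$ then $r\in(q,r]\cap\uuu\ne\varnothing$, so $\uu_r'\setminus\uu_q'$ is uncountable by Theorem~\ref{t12}. The substantive work therefore lies in $(iv)\Longrightarrow(i)$ and $(i)\Longrightarrow(ii)$.

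For $(iv)\Longrightarrow(i)$ I argue by contraposition. Assume $r\notin\uuu$. Using that $\uuu$ is closed and that $\vv\setminus\uuu$ is discrete (both recalled before Corollary~\ref{c23}), one can choose $q<r$ with $\lceil q\rceil=\lceil r\rceil$ such that $[q,r]\cap\uuu=\varnothing$ and $[q,r)\cap\vv=\varnothing$: in the sub-case $r\in\vv\setminus\uuu$, discreteness isolates $r$ in $\vv\setminus\uuu$ and intersecting with a $\uuu$-free neighborhood removes the remaining points of $\vv\cap\uuu$; in the sub-case $r\notin\vv$, closedness of $\vv$ gives a $\vv$-free neighborhood directly. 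Plugging this $q$ into the decomposition~\eqref{21} from the proof of Theorem~\ref{t12}, the index set $\{r_1,\dots,r_m\}$ is empty and the formula collapses to $\uu_r'=\uu_q'$, contradicting (iv). The case of integer $r$ does not arise, since integers greater than $1$ lie in $\uu\subset\uuu$.

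The heart of the corollary is $(i)\Longrightarrow(ii)$. Fix a cofinal sequence $q_n\uparrow r$ with $\lceil q_n\rceil=\lceil r\rceil$ and $q_n\notin\uuu$ (possible because $\uuu$ is nowhere dense in $(1,\infty)$, cf.~\cite{DVK}), so that each $\uu_{q_n}$ is closed and in fact compact. Adapting the closedness argument for $\ww_r$ at the end of the proof of Theorem~\ref{t12}, the set
\[
\ww_r^n:=\set{x\in\uu_r : (c_i(x))\in\uu_{q_n}'}
\]
is closed in $\mathbb{R}$. Since $\mathcal{F}_r'=\bigcup_n\uu_{q_n}'$ (by cofinality together with $\uu_q'\subset\uu_{q'}'$ for $q<q'$ of the same ceiling), the natural bijection $\uu_r\leftrightarrow\uu_r'$ identifies $\uu_r'\setminus\mathcal{F}_r'$ with $\uu_r\setminus\bigcup_n\ww_r^n$. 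I then run a Baire category argument in $\overline{\uu_r}$, which is a complete nonempty perfect set by Theorem~1.3 of~\cite{DVK}: if $\uu_r'\setminus\mathcal{F}_r'$ were countable, then $\overline{\uu_r}=\bigcup_n\ww_r^n\cup C$ for some countable $C$ (absorbing the countable set $\overline{\uu_r}\setminus\uu_r$), so by Baire some $\ww_r^n$ would contain $I\cap\overline{\uu_r}\ne\varnothing$ for an open interval $I$. One contradicts this by producing $y\in I\cap\uu_r$ whose $r$-expansion lies in $\uu_r'\setminus\uu_{q_n}'$: take $x_0\in I\cap\uu_r$, retain a long prefix $c_1\cdots c_N$ of its $r$-expansion, insert a block $0^K$ of zeros, and append a tail drawn from a fixed sequence in $\uu_r'\setminus\uu_{q_n}'$ (nonempty by Theorem~\ref{t12}).

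The main obstacle is verifying via Theorem~\ref{t11}(ii) that the spliced sequence is univoque in base~$r$: the shift-conditions at positions $k<N$ are not automatic from those of $(c_i(x_0))$, because the prefix $c_{k+1}\cdots c_N$ may coincide with an initial segment of $(\alpha_i(r))$ long enough to push the first strict drop past position $N$. The remedy is the zero-block $0^K$: since $(\alpha_i(r))$ is infinite, it contains only finite runs of zeros, so a sufficiently long $0^K$ forces the spliced shift strictly below $(\alpha_i(r))$ at some position past $N-k$; the analogous complementary conditions are handled symmetrically. Once this splicing step is secured, the Baire contradiction closes the argument.
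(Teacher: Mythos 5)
Your reduction to a Baire category argument in $\overline{\uu_r}$, the choice of a cofinal sequence $q_n\uparrow r$ with $q_n\notin\uuu$, the closed sets $\ww_r^n$, and the treatment of (ii)$\Rightarrow$(iii)$\Rightarrow$(iv) and (iv)$\Rightarrow$(i) all match the paper. The gap is in the last step of (i)$\Rightarrow$(ii): your splicing construction does not produce an element of $\uu_r$. By Theorem~\ref{t11}(ii), if $c_n>0$ then one needs $\overline{c_{n+1}c_{n+2}\ldots}<\alpha_1\alpha_2\ldots$ in base $r$; inserting a block $0^K$ after the prefix means that for the last nonzero digit $c_{n_0}$ of the prefix the relevant shift of the reflected sequence begins with $m^{K'}$ for some $K'\ge K$. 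Since $m^{K'}\ge\alpha_1\ldots\alpha_{K'}$ digitwise, the required strict inequality forces $\alpha_1=\cdots=\alpha_{K'}=m$, which fails for every non-integer $r$ once $K$ exceeds the first index $j$ with $\alpha_j<m$. So the long zero block, which you introduce precisely to force the forward shift conditions below $(\alpha_i(r))$, simultaneously and unavoidably violates the complementary conditions; the two families of conditions are antagonistic here, and ``handled symmetrically'' does not rescue the step. The prefix always contains a nonzero digit (take $I$ away from $0$), so this is not an edge case.

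The construction is also unnecessary. Once Baire gives you $\ww_r^n\supset I\cap\overline{\uu_r}\ne\varnothing$, you already have a contradiction without exhibiting any new univoque point: $\ww_r^n\subset\uu_r$, while $\overline{\uu_r}\setminus\uu_r$ is dense in $\overline{\uu_r}$ (Theorem 1.3 in \cite{DVK}, applicable since $r\in\uuu$ so $\uu_r$ is not closed), hence $I\cap\overline{\uu_r}$ must meet $\overline{\uu_r}\setminus\uu_r$ and cannot be contained in $\ww_r^n$. This is exactly how the paper closes the argument. If you insist on an explicit point of $I\cap\uu_r$ outside $\uu_{q_n}'$, you would need the more delicate block substitutions from \cite{DVK, KL2} (built from segments of $(\alpha_i)$ and $(\overline{\alpha_i})$ rather than zero blocks), but that machinery is not needed for this corollary.
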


\begin{proof}
It is clear that $\rm{ (ii) \Longrightarrow (iii) \Longrightarrow (iv)}$. It remains to show that $\rm{ (i) \Longrightarrow (ii)}$ and $\rm{ (iv) \Longrightarrow (i)}$.

$\rm{ (i) \Longrightarrow (ii)}$: Suppose $r \in \uuu$. Let $(q_n)_{n \ge 1}$ be an increasing sequence that converges to $r$,
such that $q_n \notin \uuu$ and $\lceil q_n \rceil=\lceil r \rceil$, $n \in \mathbb{N}$. This can be done, since $\uuu$ is a null set. Let
\begin{equation*}
\ww_r^n=\set{x \in \uu_r : \text{ the unique expansion of $x$ in base $r$ belongs to } \uu_{q_n}'},
\end{equation*}
and
\begin{equation*}
\ww_r= \cup_{n=1}^{\infty} \ww_r^n.
\end{equation*}
It follows from the proof of Theorem~\ref{t12} that $\ww_r^n$ is closed for each $n \in \mathbb{N}$. Moreover,
$\vert \uu_r \setminus \ww_r \vert = \vert \uu_r' \setminus \mathcal{F}_r' \vert$. We know that $\overline{\uu_r} \setminus \uu_r$ is countable.
If $\uu_r \setminus \ww_r$ were countable,
then $\overline{\uu_r}$ would be an $F_{\sigma}$-set:
\begin{equation}\label{22}
\overline{\uu_r}= \cup_{n=1}^{\infty} \ww_r^n \cup (\cup_{x \in \overline{\uu_r} \setminus \ww_r} \set{x}).
\end{equation}
Note that $\overline{\uu_r}$ is a complete metric space. By Baire's theorem, one of the sets on the right-hand side of \eqref{22} has
a nonempty interior. Since $\overline{\uu_r}$ is a perfect set, each singleton belonging to it is not open. Hence one of the sets $\ww_r^n \subset \uu_r$ has an interior point. But this contradicts the fact that $\overline{\uu_r} \setminus \uu_r$ is dense in $\overline{\uu_r}$ (Theorem 1.3 in \cite{DVK}).

$\rm{ (iv) \Longrightarrow (i)}$: We prove the contraposition. Suppose $r \notin \uuu$. We can choose $q \in (1, r)$ close enough to $r$ such that
$[q,r) \cap \vv = \varnothing$. It follows from \eqref{21} that $\uu_q' = \uu_r'$.
\end{proof}

Let $q>1$ be a non-integer, and let $\mathcal{G}_q'= \cap \uu_r'$, where the intersection runs over
all $r >q$ for which $\lceil q \rceil=\lceil r \rceil$. In view of Corollary~\ref{c24} it is natural to ask whether the following variant of Corollary~\ref{c23} holds: the number $q>1$ belongs to $\uu$ if and only if  $\mathcal{G}_q' \setminus \uu_q'$ is uncountable. In order to show that this is {\it not} true, it is sufficient to prove that $\mathcal{G}_q' = \vv_q'$ for each non-integer $q>1$, since  $\vv_q' \setminus \uu_q'$ is known to be countable \cite{DVK}. Let us first recall Lemma 3.2 from \cite{KL2}:

\begin{lemma}\label{l26}
Let $q>1$ be a non-integer, and let $(\beta_i)=\beta_1 \beta_2 \ldots$ be the greedy expansion of $1$ in base $q$. For each $n \in \mathbb{N}$, there exists a number $r=r_n >q$ such that the greedy expansion of $1$ in base $r$ starts with $\beta_1 \ldots \beta_n$.
\end{lemma}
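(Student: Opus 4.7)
The plan is to prove Lemma~\ref{l26} by a direct continuity-and-monotonicity argument on the greedy algorithm itself; I will exhibit $r_n$ as any real number slightly larger than $q$, bypassing Proposition~\ref{p21} and any construction through the quasi-greedy machinery.

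First I would fix the setting. Since $q$ is a non-integer, $m := m(q) = \lfloor q \rfloor$, and there is $\delta_0 > 0$ with $m(r) = m$ for every $r \in [q, q+\delta_0)$; in particular the admissible digit alphabet $\{0, \ldots, m\}$ does not change on this interval. Introduce the continuous partial sums
\begin{equation*}
S_k(r) := \sum_{i=1}^{k} \frac{\beta_i}{r^i}, \quad k \ge 1,
\end{equation*}
and note that each $S_k$ is strictly decreasing in $r$ on $[q, q+\delta_0)$ because $\beta_1 = m \ge 1$.

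The next step is to translate the conclusion into inequalities on $r$. Applying the greedy algorithm to $x = 1$ in base $r$ one step at a time, its expansion starts with $\beta_1 \cdots \beta_n$ if and only if for every $k \le n$ one has both
\begin{equation*}
\text{(I)} \quad S_k(r) \le 1 \qquad \text{and} \qquad \text{(II)} \quad \beta_k = m \ \text{ or } \ S_{k-1}(r) + \frac{\beta_k + 1}{r^k} > 1.
\end{equation*}
Both (I) and (II) hold at $r = q$ because $(\beta_i)$ is the greedy expansion of $1$ in base $q$. Moreover, whenever $\beta_k < m$, the relevant inequality in (II) is a strict inequality at $r = q$, so by continuity and the finiteness of the index set $\{k \le n : \beta_k < m\}$, condition (II) persists on some interval $[q, q+\delta_1]$.

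The main obstacle is condition (I), which at $r = q$ only holds with weak inequality: if $(\beta_i)$ happens to be finite with terminal index $N \le n$, then $S_N(q) = 1$, so (I) is not strict at $q$. This borderline case is exactly where the strict monotonicity of $S_k$ is crucial: for every $r > q$ one immediately has $S_k(r) < S_k(q) \le 1$, so (I) automatically upgrades to a strict inequality for all $k \le n$. Choosing $r = q + \delta$ with $0 < \delta < \min(\delta_0, \delta_1)$ thus validates both (I) and (II) for every $k \le n$, and the lemma follows. The argument uses essentially that we have one-sided freedom, $r > q$ rather than merely $r$ near $q$, since strict monotonicity in $r$ is precisely what salvages condition (I) in the borderline case.
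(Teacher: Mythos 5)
Your argument is correct. Note first that the paper itself offers no proof of this statement: Lemma~\ref{l26} is quoted verbatim from Lemma~3.2 of \cite{KL2}, so there is no internal proof to compare against. The route suggested by the surrounding machinery (and the one used in \cite{KL2}) is lexicographic: one builds an explicit admissible sequence beginning with $\beta_1\ldots\beta_n$ that is strictly larger than $(\beta_i(q))$ in the lexicographic order, checks the Parry-type admissibility condition of Theorem~\ref{t11}(i) (or Proposition~\ref{p21} after passing to quasi-greedy expansions), and then invokes the strict monotonicity of the map from bases to expansions of $1$ to conclude that the corresponding base $r$ exceeds $q$. Your proof is a genuinely different, purely analytic perturbation argument: you encode ``the greedy expansion of $1$ in base $r$ begins with $\beta_1\ldots\beta_n$'' as the finite system of inequalities (I) and (II) (which is indeed an exact characterization, by induction on $k$), observe that the inequalities in (II) that matter are strict at $r=q$ and hence stable under small perturbations, and handle the only non-strict inequality, (I), by the strict monotonicity of $r\mapsto S_k(r)$ — which is exactly where the one-sided freedom $r>q$ is used, and which correctly disposes of the borderline case of a finite greedy expansion with $S_N(q)=1$. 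This buys a self-contained, elementary proof that never touches the lexicographic characterizations; the lexicographic proof, by contrast, produces a concrete admissible sequence for $r$ and fits more naturally into the combinatorial framework the rest of the paper relies on. The only cosmetic point worth adding is a one-line justification that $\beta_1=m\ge 1$ (so that $S_k$ really is strictly decreasing), which you assert and which follows since $m<q$ gives $m/q<1$.
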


If $q \in \uu \setminus \mathbb{N}$, then 1 has an infinite greedy expansion in base $q$, i.e., $(\alpha_i)=(\alpha_i(q))=(\beta_i(q))$. If a sequence $(a_i) \in \set{0, \ldots, \alpha_1}^{\mathbb{N}}$ belonged to $\mathcal{G}_q' \setminus \vv_q'$, then either there would exist indices $n$ and $m$, such that
$$a_n < \alpha_1 \quad \text{and} \quad a_{n+1} \ldots a_{n+m} > \alpha_1 \ldots \alpha_m$$
or there would exist indices $n$ and $m$, such that
$$a_n > 0 \quad \text{and} \quad \overline{a_{n+1} \ldots a_{n+m}} > \alpha_1 \ldots \alpha_m.$$
If $r_m >q$ is the number that is defined in Lemma~\ref{l26}, then $\alpha_i(q)=\alpha_i(r_m)$ for $1 \le i \le m$, and thus
$(a_i) \notin \uu_{r_m}'$ which is a contradiction.
On the other hand, $\vv_q' \subset \uu_r'$ for each $r>q$ such that $\lceil q \rceil=\lceil r \rceil$, and therefore
$\mathcal{G}_q' = \vv_q'$.

If $q \in (1, \infty) \setminus \uu$, then the equality $\mathcal{G}_q' = \vv_q'$ easily follows from Theorem 1.7 in \cite{DVK}.
\vskip.3cm
\noindent
{\it Acknowledgement.} The author is indebted to Vilmos Komornik for his valuable suggestions and for a careful reading of the manuscript.


\begin{thebibliography}{[K$^3$1]}

\bibitem{BK} C. Baiocchi, V. Komornik, {\em Greedy and quasi-greedy expansions in non-integer bases}, arXiv:math/0710.3001.

\bibitem{DDV} K. Dajani, M. de Vries,  {\em Invariant densities for
random $\beta$-expansions}, J.\ Eur.\ Math.\ Soc.\ {\bf 9} (2007), no. 1, 157--176.

\bibitem{DK1} Z. Dar\'oczy, I. K\'atai, {\em Univoque sequences},
Publ.\ Math.\ Debrecen {\bf 42} (1993), no. 3--4, 397--407.

\bibitem{DK2} Z. Dar\'oczy, I. K\'atai, {\em On the structure of
univoque numbers}, Publ.\ Math.\ Debrecen {\bf 46} (1995), no. 3--4, 385--408.

\bibitem{DVK} M. de Vries, V. Komornik, {\em Unique expansions of real numbers}, Adv. Math. {\bf 221} (2009), no. 2, 390--427.

\bibitem{EHJ} P. Erd\H os, M. Horv\'ath, I. Jo\'o,
{\em On the uniqueness of the expansions} $1=\sum q^{-n_i}$,
Acta Math. Hungar. {\bf 58} (1991), no. 3--4, 333--342.

\bibitem{EJK1} P. Erd\H os, I. Jo\'o, V. Komornik,
{\em Characterization of the unique expansions $1=\sum
q^{-n_i}$ and related problems},   Bull. Soc. Math. France  {\bf 118}
(1990), no. 3, 377--390.

\bibitem{EJK2} P. Erd\H{o}s, I. Jo\'o, V. Komornik,
{\em On the number of $q$-expansions}, Ann.\ Univ.\ Sci.\ Budapest.\
E\"otv\"os Sect.\ Math.\ {\bf 37} (1994), 109--118.

\bibitem{GS} P. Glendinning, N. Sidorov, {\em Unique
representations of real numbers in non-integer bases}, Math.\ Res.\ Lett. {\bf
8} (2001), no. 4, 535--543.

\bibitem{KL1} V. Komornik, P. Loreti,
{\em Unique developments in non-integer bases},
Amer.  Math. Monthly {\bf 105} (1998), no. 7, 636--639.

\bibitem{KL2} V. Komornik, P. Loreti,
{\em On the  topological structure of univoque sets},
J. Number Theory {\bf 122} (2007), no. 1, 157--183.

\bibitem{P} W. Parry,
{\em On the $\beta$-expansion of real numbers},
Acta  Math. Acad. Sci. Hungar. {\bf 11} (1960), 401--416.

\bibitem{R} A. R\'enyi,
{\em Representations for real numbers and their ergodic properties},
Acta  Math. Acad. Sci. Hungar. {\bf 8} (1957), 477--493.

\bibitem{S} N. Sidorov, {\em Almost every number has a continuum of $\beta$-expansions}, Amer. Math. Monthly {\bf 110} (2003), no. 9, 838--842.


\end{thebibliography}
\end{document}